\newtheorem{theorem}{Theorem}[section]
\newtheorem{lemma}[theorem]{Lemma}
\newtheorem{proposition}[theorem]{Proposition}
\newcommand{\clone}[1]{\overline{#1}}
\newcommand{\fig}[1]{\includegraphics[page=#1]{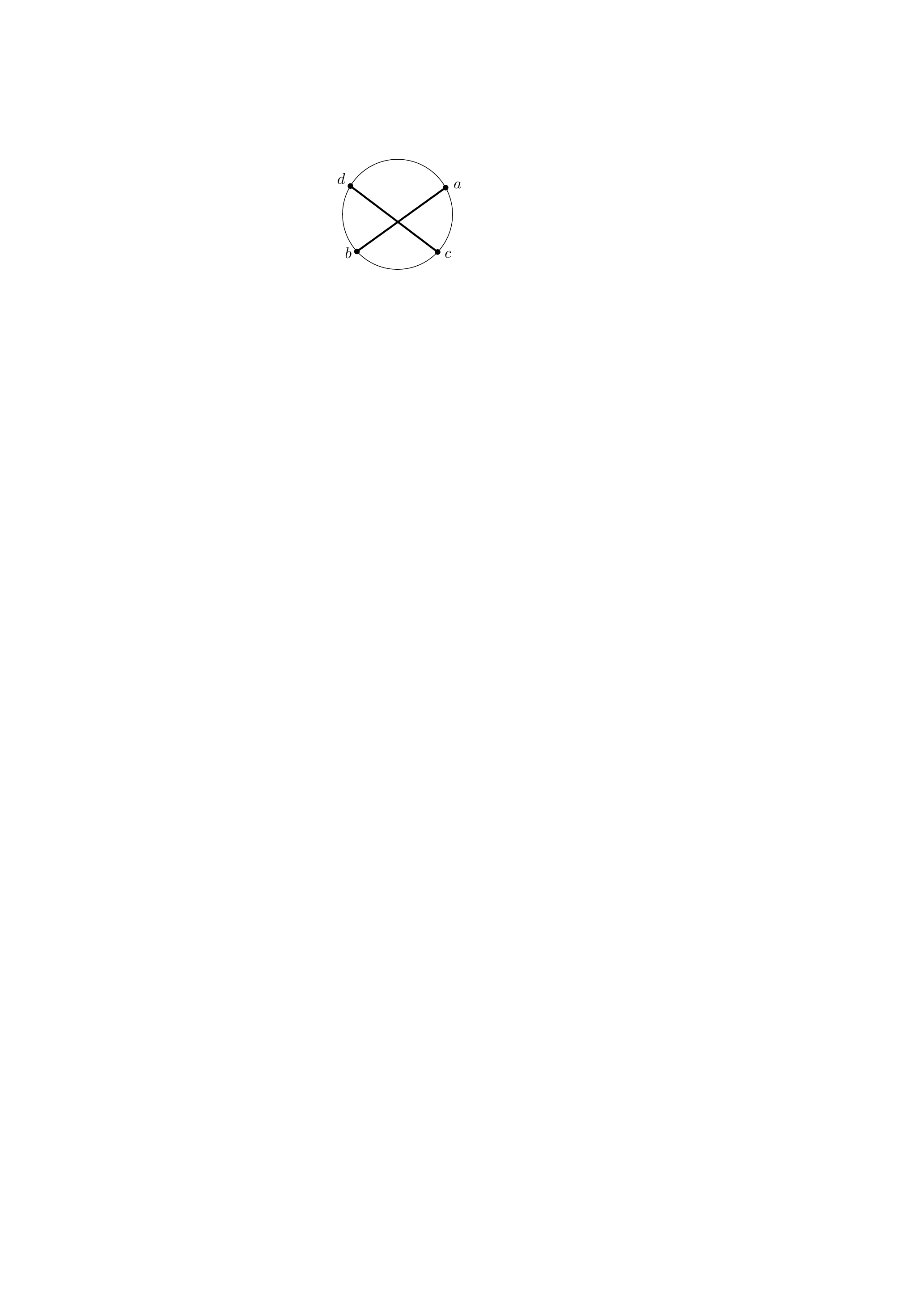}}
\newcommand{\hf}{\hspace*{0mm}\hfill\hspace*{0mm}}
\newenvironment{xcase}[2]{\smallskip\par\noindent\emph{Case #1:} \emph{#2}.}{\par\smallskip}
\tikzstyle{vertex}=[circle, fill=black, minimum size=6pt, inner sep=0]
\tikzstyle{edge}=[semithick]
\begin{document}\suppressfloats
\title{Edge-critical subgraphs of Schrijver graphs}
\author{Tom\'a\v s Kaiser}
\thanks{The first author was supported by project GA17-04611S of the Czech Science
  Foundation.}
\address{Department of Mathematics and European Centre of Excellence
  NTIS (New Technologies for the Information Society), University of
  West Bohemia, Pilsen, Czech Republic.}
  \email{kaisert@kma.zcu.cz}
\author{Mat\v ej Stehl\'ik}
\thanks{The second author was partially supported by ANR project GATO
(ANR-16-CE40-0009-01) and by LabEx PERSYVAL-Lab (ANR-11-LABX-0025).}
\address{Laboratoire G-SCOP, Univ.\ Grenoble Alpes, France}
\email{matej.stehlik@grenoble-inp.fr}

\begin{abstract}
  For $k\geq 1$ and $n\geq 2k$, the Kneser graph $KG(n,k)$ has all
  $k$-element subsets of an $n$-element set as vertices; two such
  subsets are adjacent if they are disjoint. It was first proved by
  Lov\'{a}sz that the chromatic number of $KG(n,k)$ is
  $n-2k+2$. Schrijver constructed a vertex-critical subgraph $SG(n,k)$
  of $KG(n,k)$ with the same chromatic number. For the stronger notion
  of criticality defined in terms of removing edges, however, no
  analogous construction is known except in trivial cases. We provide
  such a construction for $k=2$ and arbitrary $n\geq 4$ by means of a
  nice explicit combinatorial definition.
\end{abstract}
\maketitle

\section{Introduction}

For positive integers $n,k$, where $n\geq 2k$, the \emph{Kneser graph}
$KG(n,k)$ has all $k$-element subsets of the set $[n] = \{1,\dots,n\}$
as its vertices, with edges joining disjoint pairs of subsets. It was
conjectured by Kneser~\cite{K55} and proved by Lov\'{a}sz~\cite{L78}
that the chromatic number of $KG(n,k)$ is
$n-2k+2$. Schrijver~\cite{S78} proved that there is a subgraph of
$KG(n,k)$ that is in general much smaller and still has chromatic
number $n-2k+2$. This is the \emph{Schrijver graph} $SG(n,k)$, defined
as the induced subgraph of $KG(n,k)$ on the set of all \emph{stable}
$k$-subsets of $[n]$ --- that is, those that contain no pair of
consecutive elements nor the pair $1,n$. In fact, Schrijver proved
that $SG(n,k)$ is \emph{vertex-critical}, i.e., the removal of any
vertex of $SG(n,k)$ decreases the chromatic number.

It is natural to ask whether $SG(n,k)$ satisfies the stronger
condition of criticality defined in terms of removing edges. A graph
$G$ is said to be \emph{edge-critical} if $\chi(G-e) < \chi(G)$ for
each edge $e$ of $G$, where $\chi$ denotes the chromatic
number. Equivalently, $G$ is edge-critical if none of its proper
subgraphs has the same chromatic number. For instance, the graph
$SG(2k+1,k)$ is edge-critical (being isomorphic to a cycle of length
$2k+1$) and so is $SG(n,1)$ (the complete graph $K_n$), but this is
not the case for $SG(n,k)$ with $n\geq 2k+2$ and $k \geq 2$.

In this paper, we give a simple combinatorial description of an
edge-critical spanning subgraph of the graph $SG(n,2)$ (for any
$n\geq 4$) that was discovered in the course of our work on colouring
quadrangulations of projective spaces~\cite{KS15,KS17}. This is the
first step to a description of such edge-critical subgraphs in
$SG(n,k)$ for general $k$, which is currently work in progress.

For every integer $n \geq 4$, we define the graph $G_n$ as follows.
The vertex set of $G_n$ is the set of all stable $2$-subsets of
$[n]$. A stable subset $\{a,b\}$, where $a<b$, is denoted by $ab$.
Edges in $G_n$ only join disjoint pairs of $2$-subsets of $[n]$. Let
$ab$ and $cd$ be such a pair, where $a < c$. The vertices $ab$ and
$cd$ are adjacent in $G_n$ if and only if one of the following holds:
\begin{itemize}
\item $a < c < b < d$ (a \emph{crossing} pair), or
\item $1 < a < c < d < b$ (a \emph{transverse} pair).
\end{itemize}
See Figure~\ref{fig:pairs} for an illustration of the definition. In
this figure, vertices of $G_n$ are visualised as chords of the cycle
$C_n$. Accordingly, we sometimes refer to vertices of $G_n$ as
\emph{chords} of $C_n$.

\begin{center}
  \begin{figure}
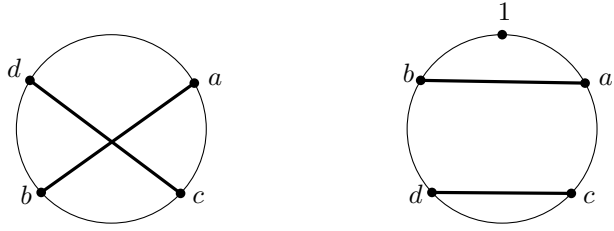

    \hf\fig1\hf\fig2\hf
    \caption{A pair of chords $ab,cd$ that is crossing (left) and
      another that is transverse (right).}
    \label{fig:pairs}
  \end{figure}
\end{center}

The main result of this paper is the following.

\begin{theorem}
\label{thm:main}
For every $n \geq 4$, the graph $G_n$ is $(n-2)$-chromatic and
edge-critical.
\end{theorem}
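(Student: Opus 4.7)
Since $G_n$ is a spanning subgraph of $SG(n,2)$, the upper bound $\chi(G_n)\le n-2$ is immediate from Schrijver's theorem. The proof thus consists of the matching lower bound $\chi(G_n)\ge n-2$ together with the verification that removing any edge strictly decreases the chromatic number.

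For the lower bound my plan is topological, following the authors' earlier work \cite{KS15,KS17} on chromatic numbers of quadrangulations of projective spaces. The two edge types (crossing and transverse) and the chord picture of Figure~\ref{fig:pairs} strongly suggest that $G_n$ carries the structure of a quadrangulation of $\mathbb{RP}^{n-4}$, with the crossing and transverse edges furnishing the alternating sides of each quadrilateral $2$-cell. Checking non-bipartiteness is immediate (the odd cycle $G_5\cong C_5$ sits inside $G_n$), and the authors' theorem that a non-bipartite quadrangulation of $\mathbb{RP}^d$ has chromatic number at least $d+2$ then yields $\chi(G_n)\ge n-2$.

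For edge-criticality I would fix an arbitrary edge $e=\{ab,cd\}$ of $G_n$ and exhibit an explicit $(n-3)$-colouring of $G_n-e$. The involution $i\mapsto n+2-i$ on $\{2,\dots,n\}$, fixing $1$, is an automorphism of $G_n$, and together with other local symmetries it reduces the task to a bounded list of edge orbits, one for each qualitative configuration of the pair $ab,cd$. For each representative I would start from the canonical $(n-2)$-colouring $c(\{a,b\})=\min(a,b)$ of $SG(n,2)$ and perform a Kempe-style recolouring along a short chain connecting $ab$ and $cd$, merging two colour classes whose only residual conflict is $e$ itself.

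I expect the principal obstacle to be the lower bound, specifically identifying the correct cell-complex structure on $G_n$ and verifying that the underlying space is the claimed projective space (most likely by induction on $n$, with $G_4\cong K_2$ and $G_5\cong C_5$ as base cases that need some care since the ambient sphere is then low-dimensional). Once $\chi(G_n)=n-2$ is established, edge-criticality reduces to a systematic but bounded case analysis.
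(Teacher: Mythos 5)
Your plan leaves both halves of the theorem unproved. For the lower bound, the entire weight rests on the assertion that $G_n$ is (or carries) a quadrangulation of $\mathbb{RP}^{n-4}$; you present this as something the chord picture ``strongly suggests'' and yourself flag identifying the cell structure as the principal obstacle. That identification \emph{is} the hard content of the lower bound, so as written this is a gap, not a proof (the intuition is sound --- the graph did arise from the authors' work on projective quadrangulations --- but nothing in your sketch constructs the complex, verifies that the crossing/transverse edges bound quadrilateral cells, or shows the underlying space is the right projective space). The paper sidesteps topology entirely: Lemma~\ref{lem:homomorphism} exhibits an explicit homomorphism $h:M(G_{n-1})\to G_n$ from the Mycielskian, defined by $h(ab)=ab$, $h(\clone{ab})=an$ (or $bn$ when $a=1$) and $h(*)=\{1,n-1\}$, and a short case check on the two edge types; since $M(\cdot)$ raises the chromatic number, induction from $\chi(G_5)=3$ gives $\chi(G_n)\ge n-2$. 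That elementary route is both self-contained and much shorter than establishing a quadrangulation structure.

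The edge-criticality half is likewise only a sketch. Saying that a ``Kempe-style recolouring along a short chain'' merges two colour classes of the min-colouring with $e$ as the only residual conflict does not identify the chain, and it is not evident that such a chain exists for every edge type; you would still have to produce and verify an explicit colouring in each configuration. The paper's Lemma~\ref{lem:critical} does exactly that in three cases: it takes a set $A\supseteq\{a,b,c,d\}$ of size $4$, $5$ or $6$ (adjoining $1$, and in the transverse case also an element $x$ with $c<x<d$), colours every stable pair not contained in $A$ by $\min(\{x,y\}\setminus A)$, and then colours the finitely many stable pairs inside $A$ with one, two or three fresh colours so that the only monochromatic adjacent pair is $ab,cd$. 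Your reflection $i\mapsto n+2-i$ is indeed an automorphism of $G_n$, but it does not remove the need for this finite verification. To turn your proposal into a proof you would need to replace the quadrangulation claim by an actual construction (or an elementary argument such as the Mycielski homomorphism) and replace the Kempe step by explicit colourings as above.
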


Since $G_n$ is an edge-critical spanning subgraph of $SG(n,2)$, it is
interesting to compare the number of edges in $G_n$ to the number of
edges in $SG(n,2)$.  The only case where $G_n$ is not a proper
subgraph of $SG(n,2)$ is when $n \leq 5$: $G_4$ and $SG(4,2)$ are both
isomorphic to $K_2$, while $G_5$ and $SG(5,2)$ are both isomorphic to
$C_5$. For $n>5$, $G_n$ is a proper subgraph of $SG(n,2)$, and the
following proposition determines the asymptotic ratio of their sizes.

\begin{proposition}\label{p:ratio}
  As $n \to \infty$, the ratio $|E(G_n)|/|E(SG(n,2))|$ tends to $2/3$.
\end{proposition}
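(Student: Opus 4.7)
The plan is to count the edges of both graphs via a simple classification of how two disjoint chords of $C_n$ can be positioned relative to one another, and to show that each edge count is a constant multiple of $\binom{n}{4}$ up to lower-order terms.

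The first step is the observation that for every $4$-element subset $\{a,b,c,d\}\subseteq[n]$ with $a<b<c<d$, there are exactly three ways to partition it into two disjoint $2$-subsets, namely $\{a,b\}\mid\{c,d\}$, $\{a,c\}\mid\{b,d\}$, and $\{a,d\}\mid\{b,c\}$, producing respectively a \emph{separated}, a \emph{crossing}, and a \emph{nested} pair of chords. It follows that the total number of unordered disjoint pairs of $2$-subsets of $[n]$ is $3\binom{n}{4}$, with exactly $\binom{n}{4}$ pairs of each type.

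The second step is to absorb the stability condition into a lower-order error term. There are only $n$ non-stable chords, and each participates in at most $\binom{n-2}{2}$ disjoint pairs, so the number of disjoint pairs involving at least one non-stable chord is $O(n^3)$. Since every disjoint pair of stable chords is an edge of $SG(n,2)$, this yields $|E(SG(n,2))|=3\binom{n}{4}-O(n^3)$. In $G_n$, only crossing and transverse pairs contribute edges; transverse pairs are precisely the nested pairs whose outer chord $ab$ satisfies $a\geq 2$, and the nested pairs with $a=1$ number $\binom{n-1}{3}=O(n^3)$. Combining this with the same stability correction gives $|E(G_n)|=2\binom{n}{4}-O(n^3)$.

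Dividing and using $\binom{n}{4}\sim n^4/24$ then yields $|E(G_n)|/|E(SG(n,2))|\to 2/3$. The argument is essentially a routine counting exercise, so no serious obstacle is expected; the only mild subtlety is handling the ``outer chord uses vertex $1$'' exception in the definition of a transverse pair cleanly, but as that term is only $O(n^3)$ it leaves the leading $\binom{n}{4}$-coefficients intact.
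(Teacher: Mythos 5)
Your proposal is correct and follows essentially the same route as the paper: classify disjoint pairs of chords by type (your separated/crossing/nested corresponds to the paper's lateral/crossing/transverse), observe that each $4$-subset of $[n]$ yields exactly one pair of each type so each count is $\binom{n}{4}-O(n^3)$, and conclude. You merely spell out the $O(n^3)$ corrections (non-stable chords and nested pairs with outer endpoint $1$) that the paper leaves as ``not hard to show.''
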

\begin{proof}
  Each edge of $G_n$ corresponds to either a crossing pair or a
  transverse pair of chords of the cycle $C_n$. Let us call the pairs
  of disjoint chords of $C_n$ not corresponding to any of these types
  \emph{lateral}. (That is, chords $ab$ and $cd$ form a lateral pair
  if $a < b < c < d$ or $c < d < a < b$.)

  Let us estimate the number of pairs of each of these three
  types. Any pair of chords determines a $4$-tuple of elements of
  $[n]$, namely the endvertices of the chords. For crossing pairs,
  this is in fact a $1$--$1$ correspondence, since any $4$-element
  subset of $[n]$ determines precisely one crossing pair. Thus, the
  number of crossing pairs is $\binom n 4$.

  For transverse and lateral pairs, the correspondence is no longer
  one-to-one, but it is not hard to show that the number of pairs of
  each of these types is $\binom n 4 - O(n^3)$. It follows that
  \begin{align*}
    |E(G_n)| &= 2\binom n 4 - O(n^3),\\
    |E(SG(n,2))| &= 3\binom n 4 - O(n^3),
  \end{align*}
  so the asymptotic ratio of these two quantities is $2/3$ as claimed.
\end{proof}


\section{Proof of Theorem~\ref{thm:main}}

The Mycielski construction~\cite{M55} is one of the earliest and
arguably simplest constructions of triangle-free graphs of arbitrarily
high chromatic number. Given a graph $G=(V,E)$, we let $M(G)$ be the
graph with vertex set $V \cup \{\clone u:\,u\in V\} \cup \{*\}$,
where there are edges $\{u,v\}$ and $\{u,\clone v\}$ whenever
$\{u,v\} \in E$, and an edge $\{\clone u,*\}$ for all $u \in
V$. For each $u\in V$, the vertex $\clone u$ is referred to as the
\emph{clone} of $u$ in $M(G)$.

It is an easy exercise to show that the chromatic number increases
with each iteration of $M(\cdot)$. Let $M_k$ be the graph obtained
from $K_2$ by iterating the Mycielski construction $k-2$ times.
It is easy to see that $M_k$ is $k$-chromatic (in fact, $M_k$ is
$k$-edge-critical).

Theorem~\ref{thm:main} follows immediately from the following two lemmas.

\begin{lemma}
  \label{lem:homomorphism}
  For $n\geq 5$, there is a homomorphism
  \[
  h:\,M(G_{n-1}) \to G_n.
  \]
  In particular, $\chi(G_n)\geq n-2$.
\end{lemma}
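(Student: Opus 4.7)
I would construct the homomorphism $h$ explicitly. Since every stable $2$-subset of $[n-1]$ is also a stable $2$-subset of $[n]$, and the crossing/transverse conditions defining $G_n$ depend only on the usual order on $[n]$, the identity $ab \mapsto ab$ embeds $G_{n-1}$ into $G_n$. So I would set $h(ab)=ab$ on $V(G_{n-1})$, reducing the task to choosing $h$ on the clones and on $*$.

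For each clone $\clone{ab}$ I need $h(\clone{ab})$ to be adjacent in $G_n$ to every $cd \in N_{G_{n-1}}(ab)$. The natural candidates are chords of $C_n$ using the new vertex $n$, obtained by replacing an endpoint of $ab$. Since $\{1,n\}$ fails to be stable, I would try
\[
  h(\clone{ab}) = \begin{cases} \{a,n\} & \text{if } a \neq 1,\\ \{b,n\} & \text{if } a = 1,\end{cases}
\]
noting that in the second case $b\le n-2$ (because $ab\neq\{1,n-1\}$), so the image is indeed stable. For $h(*)$ a single chord has to be adjacent to every such image; since every image has the form $\{j,n\}$ with $2\le j\le n-2$, the choice $h(*)=\{1,n-1\}$ suggests itself, as $\{1,n-1\}$ crosses every such chord in $G_n$.

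The core of the proof is then checking that $h$ preserves adjacency. The edges of $M(G_{n-1})$ come in three flavours: edges $ab\sim cd$ of $G_{n-1}$, which are preserved by the embedding; \emph{cross} edges $ab\sim\clone{cd}$, for which one must verify that substituting $n$ for the chosen endpoint of $cd$ keeps the resulting pair $\{ab,h(\clone{cd})\}$ crossing or transverse; and edges $*\sim\clone{ab}$, which reduce to checking that $\{1,n-1\}$ is adjacent to $\{j,n\}$ for every $2\le j\le n-2$. I expect the cross-edge check to be the main obstacle, requiring a small case analysis according to the relative order of $a$ and $c$ and whether $\{ab,cd\}$ was originally crossing or transverse. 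The unifying observation is that $b\le n-1<n$, so replacing $c$ (or $d$, when $c=1$) by $n$ always pushes the outer endpoint past $b$, producing either a crossing pair when $a<c$ or a transverse pair when $c<a$.

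Finally, the inequality $\chi(G_n)\ge n-2$ follows by induction on $n$. The base case $G_4\cong K_2$ gives $\chi(G_4)=2$. For the inductive step, the homomorphism yields $\chi(G_n)\ge\chi(M(G_{n-1}))=\chi(G_{n-1})+1\ge n-2$, using the standard Mycielski identity $\chi(M(H))=\chi(H)+1$ and the inductive hypothesis.
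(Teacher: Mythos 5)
Your construction of $h$ is exactly the one in the paper (identity on $V(G_{n-1})$, $\clone{ab}\mapsto\{a,n\}$ or $\{b,n\}$ according to whether $a\neq 1$, and $*\mapsto\{1,n-1\}$), and your three-way split of the edge verification matches the paper's case analysis, so the approach is essentially identical and correct. The only caveat is that your "unifying observation" is slightly loose in the subcase $c=1$, where the image $\{d,n\}$ forms a \emph{crossing} (not transverse) pair with $ab$ even though $c<a$ --- but this is exactly the kind of detail your deferred case analysis would resolve, and it does work out.
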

\begin{proof}
  Let $ab$ be a vertex of $G_{n-1}$; recall that this means $a <
  b$. The clone of $ab$ in $M(G_{n-1})$ is denoted by $\clone{ab}$. We
  define
  \begin{align*}
    h(ab) &= ab,\\
    h(\clone{ab}) &= \begin{cases}
      an & \text{if $a\neq 1$,}\\
      bn & \text{if $a = 1$,}
    \end{cases}\\
    h(*) &= \{1,n-1\}.
  \end{align*}

  Observe first that in all cases, the value of the mapping $h$ is a
  vertex of $G_n$. The only case that needs an explanation is that of
  $h(\clone{ab})$. Here, if $a\neq 1$, then $an$ is stable, since $a
  < b \leq n-1$. On the other hand, if $a=1$, then $2 < b < n-1$ (since $ab$ is a
  vertex of $G_{n-1}$), so $bn$ is stable.

  Let us show that $h$ is a homomorphism. We consider an edge $e$ of
  $M(G_{n-1})$ and prove, in each of the following cases, that the
  image of $e$ under $h$ is an edge of $G_n$.

  \begin{xcase}{1}{$e$ is an edge of $G_{n-1}$}
    Note that $h(ab)=ab$ and $h(cd)=cd$. Since a crossing pair of
    vertices of $G_{n-1}$ is also crossing in $G_n$, and similarly for
    a transverse pair, $e$ is an edge of $G_n$.
  \end{xcase}
  \begin{xcase}{2}{$e$ has endvertices $ab$ and $\overline{cd}$, where
      $ab$ and $cd$ form an edge of $G_{n-1}$}
    We have $h(ab)=ab$. For $h(\clone{cd})$, we have
    \begin{equation*}
      h(\clone{cd}) =
      \begin{cases}
        cn & \text{if $c > 1$,}\\
        dn & \text{if $c = 1$.}
      \end{cases}
    \end{equation*}
    
    Suppose first that the pair $ab,cd$ is crossing in
    $G_{n-1}$. (This subcase is illustrated in
    Figure~\ref{fig:cases}.) If $a < c < b < d$, then
    $h(\clone{cd}) = cn$ and $a < c < b < n$, so the pair $ab,cn$ is
    crossing in $G_n$. (In particular, it is disjoint, which will not
    be repeated in the following subcases.)  If $1 < c < a < d < b$,
    then again $h(\clone{cd}) = cn$ and $c < a < b < n$, so the pair
    $ab,cn$ is transverse in $G_n$. Finally, if $c=1$ (and
    $1 < a < d < b$), then $h(\clone{cd}) = dn$ and $a < d < b < n$,
    so the pair $ab,dn$ is crossing.

    Suppose then that $ab,cd$ is a transverse pair in $G_n$. Thus,
    $1\notin\{a,b,c,d\}$ and in particular $h(\clone{cd}) = cn$. If $a
    < c < d < b$, then $a < c < b < n$ and the pair $ab,cn$ is
    crossing. If $c < a < b < d$, then $c < a < b < n$ and $ab,cn$ is
    a transverse pair.
  \end{xcase}
  \begin{xcase}{3}{$e$ has endvertices $\clone{cd}$ and $*$, where
      $cd$ is a vertex of $G_{n-1}$}
    We have $h(*) = \{1,n-1\}$. Since $n\in h(\clone{cd})$, the pair
    $h(\clone{cd}),h(*)$ must be crossing.
  \end{xcase}
  This concludes the proof that $h$ is a homomorphism. It follows that
  $\chi(G_n)\geq \chi(M(G_{n-1})$. The statement that
  $\chi(G_n)\geq n-2$ follows by induction with base case
  $\chi(G_5) = 3$: if we know that $\chi(G_{n-1})\geq n-3$, then
  $\chi(M(G_{n-1})) \geq n-2$ since --- as remarked above --- an application
  of $M(\cdot)$ increases the chromatic number, and therefore
  $\chi(G_n)\geq n-2$.
\end{proof}

\begin{center}
  \begin{figure}
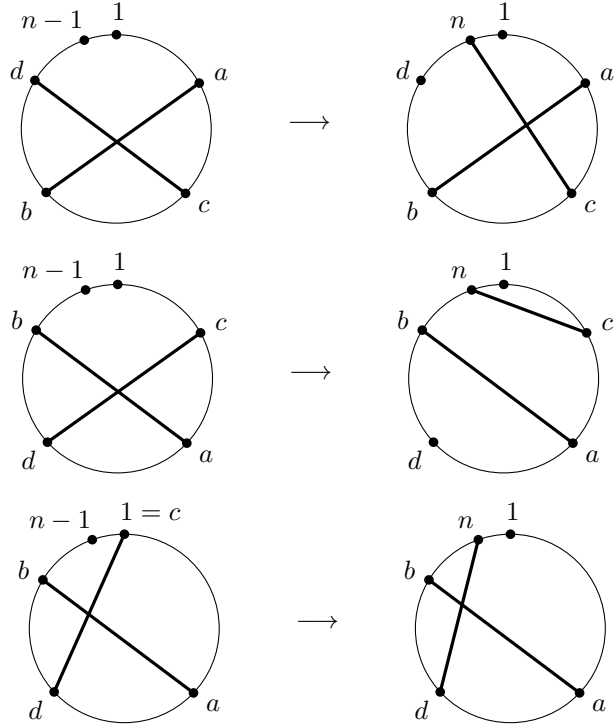

    \fig3\\[3mm]\fig4\\[3mm]\fig5
    \caption{An illustration to Case 2 in the proof of
      Lemma~\ref{lem:homomorphism}. Left: a pair of chords $ab$, $cd$
      in $C_{n-1}$. Right: the images of $ab$ and $\clone{cd}$ under
      $h$. Note how crossing pairs are transformed to either crossing
      or transverse pairs.}
    \label{fig:cases}
  \end{figure}
\end{center}

\begin{lemma}
\label{lem:critical}
  For every $n \geq 4$ and every edge $e \in E(G_n)$, the graph $G_n-e$ is
  $(n-3)$-colourable.
\end{lemma}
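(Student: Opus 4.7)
My plan is to prove the lemma by induction on $n$, with the base cases $n=4$ ($G_4\cong K_2$) and $n=5$ ($G_5\cong C_5$) immediate. Given an edge $e\in E(G_n)$, I would split cases by the location of its endpoints relative to $V(G_{n-1})\subseteq V(G_n)$. Writing the new vertex set as $V(G_n)\setminus V(G_{n-1}) = \{\{1,n-1\}\}\cup\{\{y,n\}:2\le y\le n-2\}$, a direct check from the definition of $G_n$ shows that $\{1,n-1\}$ has no neighbour in $V(G_{n-1})$ within $G_n$ (the only neighbours of $\{1,n-1\}$ are chords containing $n$); hence the cases are (1) $e\in E(G_{n-1})$, (2a) $e=\{\{1,n-1\},\{y,n\}\}$, and (2b) $e=\{u,\{y,n\}\}$ with $u\in V(G_{n-1})$.

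For Case (1), the induction hypothesis gives a proper $(n-4)$-colouring $c'$ of $G_{n-1}-e$ using colours from $[n-4]$. I would extend $c'$ to $G_n$ by assigning the new colour $n-3$ to every $\{y,n\}$ and an arbitrary colour from $[n-4]$ to $\{1,n-1\}$. This is a proper $(n-3)$-colouring of $G_n-e$: the vertices $\{y,n\}$ are pairwise non-adjacent (they share $n$), $\{1,n-1\}$ is only adjacent to the $\{y,n\}$'s within $G_n$ (with a different colour), and $c'$ does not use $n-3$.

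For Case (2a), I would use the standard min colouring $c'(\{i,j\})=\min(i,n-3)$ on $V(G_{n-1})$ and extend by $c(\{z,n\})=z$ for $2\le z\le n-3$, $c(\{n-2,n\})=1$, and $c(\{1,n-1\})=y$ (or $=1$ when $y=n-2$); a direct verification shows that $e$ is the unique monochromatic edge. Case (2b) is more delicate, because the min extension does not work: since $u$ is disjoint from $\{y,n\}$ one has $c'(u)\ne y = c(\{y,n\})$ (so $e$ fails to be monochromatic), and moreover the forced colour choices $c(\{z,n\})=z$ and $c(\{n-2,n\})=1$ leave no valid colour in $[n-3]$ for $\{1,n-1\}$. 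My plan for Case (2b) is to replace the base min colouring of $V(G_{n-1})$ by a tailored $(n-3)$-colouring, for example a permuted min colouring $c'_\pi(\{i,j\})=\min(\pi(i),\pi(j),n-3)$ for a suitable permutation $\pi$ of $[n-1]$, chosen so that the colours of the neighbours of $\{y,n\}$ in $V(G_{n-1})$ are pairwise distinct under $c'_\pi$. Then setting $c(\{y,n\})=c'_\pi(u)$ makes $e$ the unique conflict incident to $\{y,n\}$, and the remaining new vertices can be coloured compatibly.

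The main obstacle is Case (2b): establishing the existence of a suitable base colouring for every choice of $u$ and $y$. This requires a careful case analysis based on the type of $e$ (crossing or transverse) and the positions of $u$ and $y$ in $[n]$. The reflection $i\mapsto n+2-i$ (fixing $1$), which one checks to be an automorphism of $G_n$, can be used to reduce the number of sub-cases. Constructing the appropriate base colouring for each sub-case and verifying that its extension to the new vertices remains proper on $G_n-e$ is, I expect, the technical heart of the proof.
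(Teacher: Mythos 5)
Your reduction of $G_n$ to $G_{n-1}$ is set up correctly: $G_{n-1}$ is indeed the induced subgraph of $G_n$ on the stable $2$-subsets of $[n-1]$, the new vertices are $\{1,n-1\}$ and the chords $\{y,n\}$, the latter form an independent set, and $\{1,n-1\}$ is adjacent exactly to the chords through $n$. Your Cases (1) and (2a) go through as written. But Case (2b) --- which accounts for the vast majority of the edges of $G_n$ --- is not proved; you only describe a plan and defer its ``technical heart''. Worse, the plan as stated cannot work: you ask for a permutation $\pi$ such that the neighbours of $\{y,n\}$ in $V(G_{n-1})$ receive \emph{pairwise distinct} colours under $c'_\pi$. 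That neighbourhood consists of all stable pairs $\{a,b\}\subseteq[n-1]\setminus\{y\}$ with $b>y$, which has $\Theta(n^2)$ elements for, say, $y=2$, while only $n-3$ colours are available. Even the weaker requirement you actually need (that $c'_\pi(u)$ appear on no other neighbour of $\{y,n\}$, that each $\{z,n\}$ with $z\neq y$ have a free colour, and that the $n-3$ chords through $n$ jointly miss a colour for $\{1,n-1\}$) is a nontrivial simultaneous constraint that you have not verified, and your own analysis of Case (2a) shows how tight it is: under the plain min colouring the chords through $n$ are forced onto all $n-3$ colours, leaving nothing for $\{1,n-1\}$. So there is a genuine gap.

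For comparison, the paper avoids induction entirely and gives a direct colouring for each edge $e=\{ab,cd\}$: take a small set $A$ containing $a,b,c,d$ (augmented by $1$, and by an element $x$ with $c<x<d$ in the transverse case, so $|A|\in\{4,5,6\}$), colour every stable pair not contained in $A$ by $\min(\{x,y\}\setminus A)$ using $|{[n]}\setminus A|$ colours, and then colour the stable $2$-subsets of $A$ with the remaining $|A|-3$ colours by an explicit finite assignment in which the only monochromatic adjacent pair is $ab,cd$ itself. This localises all the work to a bounded number of vertices, whereas your inductive route pushes the difficulty into a global recolouring problem on $G_{n-1}$ that remains open in your write-up. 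If you want to salvage the inductive approach, you would need to exhibit, for every $u$ and $y$, an explicit proper $(n-3)$-colouring of $G_{n-1}$ satisfying all three extension conditions above; the paper's $A$-based construction suggests that modifying the min colouring only near the endpoints of $e$ is the right scale of intervention.
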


\begin{proof}
  Let $e$ be an edge of $G_n$ with endvertices $ab$ and $cd$. We will
  show that $G-e$ is $(n-3)$-colourable.

  \begin{xcase}{1}{$ab$ and $cd$ are a crossing pair and
      $1\in\{a,b,c,d\}$}
    Without loss of generality, assume that $a=1$ and $1 < c < b <
    d$. Let $A=\{a,b,c,d\}$. We first colour all stable $2$-subsets of
    $[n]$ not contained in $A$ using the $n-4$ colours from the set
    $[n]\setminus A$: any such stable $2$-subset $xy$ is coloured by
    $\min(\{x,y\}\setminus\{a,b,c,d\})$. Observe that this partial
    colouring is proper in $G_n$. (We will call it the \emph{min-based
      colouring with respect to $A$}.)

    Having used $n-4$ colours, we have one colour left for the stable
    $2$-subsets of $A$. Since $a=1$, no pair of these subsets is
    transverse. Additionally, there is only one crossing pair, namely
    $ab$ and $cd$. Assign a new colour $\ell_1$ to each stable
    $2$-subset of $A$; we obtain a proper $(n-3)$-colouring of
    $G_n-e$. (See Figure~\ref{fig:chords} for an illustration of this
    and the following cases.)
  \end{xcase}

  \begin{xcase}{2}{$ab$ and $cd$ are a crossing pair and
      $1\notin\{a,b,c,d\}$}
    Let $A=\{1,a,b,c,d\}$ and start with the min-based colouring with
    respect to $A$. This uses $n-5$ colours.
    
    We will colour stable $2$-subsets of $A$ using two new colours
    $\ell_1$ and $\ell_2$. Without loss of generality, assume that
    $a < c < b < d$. Assigning colour $\ell_1$ to all the stable
    $2$-subsets in the set $\{1a, 1b, 1c, 1d, bc, bd\}$ and colour
    $\ell_2$ to those in $\{ab,ac,ad,cd\}$, it is easy to check that
    the $2$-colouring of the induced subgraph of $G_n-e$ on the set of
    stable $2$-subsets of $A$ is proper. Consequently, we obtain a
    proper $(n-3)$-colouring of $G_n-e$.
  \end{xcase}
  
  \begin{xcase}{3}{$ab$ and $cd$ are a transverse pair}
    By the assumption, $1\notin\{a,b,c,d\}$. Without loss of
    generality, assume that $1 < a < c < d < b$. Since $cd$ is stable,
    there is $x\in [n]$ such that $c < x < d$. Let $A=\{1,a,b,c,d,x\}$
    and start with a min-based colouring with respect to $A$ using
    $n-6$ colours.

    To colour the uncoloured stable $2$-subsets of $[n]$ using new
    colours $\ell_1$, $\ell_2$ and $\ell_3$, we use the
    following rule:
    \begin{itemize}
    \item $\ell_1$ is assigned to stable $2$-subsets in $\{1a,1x,1d,ax,dx\}$, 
    \item $\ell_2$ is assigned to those in $\{1b,1c,bc,bx,cx\}$, 
    \item $\ell_3$ is assigned to those in $\{ab,ac,ad,cd,bd\}$. 
    \end{itemize}
    We obtain a proper $(n-3)$-colouring of $G_n-e$ since no crossing
    pair gets the same colour, and the only monochromatic transverse
    pair is $ab,cd$.
  \end{xcase}
\end{proof}

\begin{figure}
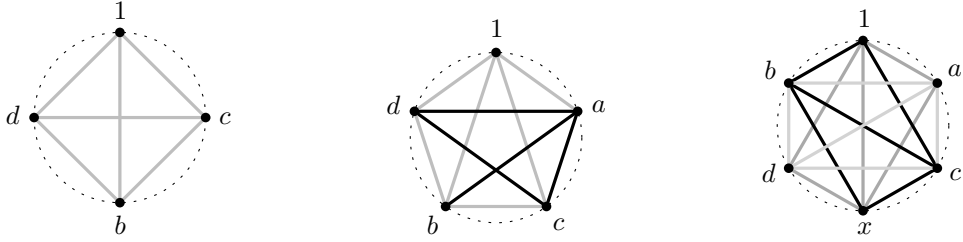

\begin{center}
  \fig6\hf\fig7\hf\fig8
\end{center}
\caption{Colouring of chords in the three cases of the proof of
  Lemma~\ref{lem:critical}. The colouring uses one colour (left), two
  colours (middle) and three colours (right) represented by shades of
  grey.}
\label{fig:chords}
\end{figure}

\bibliographystyle{plain}

\end{document}